\numberwithin{equation}{section}
\def\pmod #1{\ ({\rm{mod}}\ #1)}
\theoremstyle{plain}
\newtheorem{theorem}{Theorem}
\newtheorem{lemma}{Lemma}
\newtheorem{corollary}{Corollary}
\newtheorem{proposition}{Proposition}
\theoremstyle{definition}
\patchcmd{\@settitle}{\uppercasenonmath\@title}{}{}{}
\patchcmd{\@setauthors}{\MakeUppercase}{}{}{}
\patchcmd{\section}{\scshape}{}{}{}
\begin{document}

\title
[{On a conjecture on shifted primes with large prime factors, II}]
{On a conjecture on shifted primes with large prime factors, II}

\author
[Y. Ding] 
{Yuchen Ding}

\address{(Yuchen Ding) School of Mathematical Sciences,  Yangzhou University, Yangzhou 225002, People's Republic of China}
\email{ycding@yzu.edu.cn}

\keywords{Shifted primes, Sieve method, Brun--Titchmarsh inequality, Primes in arithmetic progressions, Elliott--Halberstam conjecture.}
\subjclass[2010]{11N05, 11N36.}

\begin{abstract}
Let $\mathcal{P}$ be the set of primes and $\pi(x)$ the number of primes not exceeding $x$. Let also $P^+(n)$ be the largest prime factor of $n$ with convention $P^+(1)=1$ and
$$
T_c(x)=\#\left\{p\le x:p\in \mathcal{P},P^+(p-1)\ge p^c\right\}.
$$
Motivated by a 2017 conjecture of Chen and Chen, we show that for any $8/9\le c<1$
$$
\limsup_{x\rightarrow\infty}T_c(x)/\pi(x)\le 8(1/c-1),
$$
which clearly means that 
$$
\limsup_{x\rightarrow\infty}T_c(x)/\pi(x)\rightarrow 0, \quad \text{as}~c\rightarrow1.
$$
\end{abstract}
\maketitle

\section{Introduction}
The investigations on shifted primes with large prime factors drew attention to people in a brilliant article of Goldfeld \cite{Gold}. Historically, this topic had aroused great concern among the community due to its unexpected connection with the first case of Fermat's last theorem, thanks to the theorems of Fouvry \cite{Fou},  Adleman and Heath-Brown \cite{AHB}.

For any positive integer $n$, let $P^+(n)$ be the largest prime factor of $n$ with convention $P^+(1)=1$. Let $\mathcal{P}$ be the set of primes and $\pi(x)$ the number of primes not exceeding $x$. For $0<c<1$ let
$
T_c(x)=\#\left\{p\le x:p\in \mathcal{P},P^+(p-1)\ge p^c\right\}.
$
As early as 1969, Goldfeld \cite{Gold} proved 
\begin{align*}
\liminf_{x\rightarrow\infty}T_{\frac{1}{2}}(x)/\pi(x)\ge 1/2.
\end{align*}
Goldfeld further remarked that his argument also leads to another fact
\begin{equation}\label{eq1-1}
\liminf_{x\rightarrow\infty}T_{c}(x)/\pi(x)>0,
\end{equation}
provided that $c<\frac{7}{12}$. It turns out that exploring large $c$ to satisfy Eq. (\ref{eq1-1}) is rather difficult and important. For improvements on the values of $c$, see 
Motohashi \cite{Moto}, Hooley \cite{Hoo1,Hoo2}, Deshouillers--Iwaniec \cite{Iwa} and Fouvry \cite{Fou}. 
Up to now, the best numerical value of $c$ satisfying Eq. (\ref{eq1-1}), with a payoff by replacing $\pi(x)$ with $\pi(x)/\log x$, is 0.677, obtained by Baker and Harman \cite{B-H}.

In a former note \cite{Ding} of this sequel, I showed that the following inequality
\begin{align}\label{eq1}
\limsup_{x\rightarrow\infty}T_{c}(x)/\pi(x)<1/2
\end{align}
holds for some absolute constant $c<1$.
As a corollary, I disproved a 2017 conjecture of Chen and Chen \cite{CC} who conjectured that 
$$
\liminf_{x\rightarrow\infty}T_{c}(x)/\pi(x)\ge 1/2
$$
for any $1/2\le c<1$.
The proof in my former note is based on the following deep result which is a corollary of the elaborate Brun--Titchmarsh inequality. 
\begin{proposition}\label{pro1}\cite[Lemma 2.2]{Wu}
There exist two functions $K_2(\theta)>K_1(\theta)>0$, defined on the interval $(0,17/32)$ such that for each fixed real $A>0$, and all sufficiently large $Q=x^\theta$, the inequalities
$$K_1(\theta)\frac{\pi(x)}{\varphi(m)}\le \pi(x;m,1)\le K_2(\theta)\frac{\pi(x)}{\varphi(m)}$$
hold for all integers $m\in(Q,2Q]$ with at most $O\left(Q(\log Q)^{-A}\right)$ exceptions, where the implied constant depends only on $A$ and $\theta$. Moreover, for any fixed $\varepsilon>0$, these functions can be chosen to satisfy the following properties:\\
$\bullet$ $K_1(\theta)$ is monotonic decreasing, and $K_2(\theta)$ is monotonic increasing.\\
$\bullet$ $K_1(1/2)=1-\varepsilon$ and $K_2(1/2)=1+\varepsilon$.
\end{proposition}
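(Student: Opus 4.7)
The plan is to reduce the statement to an ``almost-all'' smallness bound on the error term
$$E(m) := \pi(x;m,1) - \frac{\pi(x)}{\varphi(m)},$$
namely, for each $\theta\in(0,17/32)$ and $A>0$, to show
$$|E(m)| \le \varepsilon(\theta)\,\frac{\pi(x)}{\varphi(m)}$$
for every $m\in(Q,2Q]$ outside an exceptional set of size $O(Q(\log Q)^{-A})$, where $Q=x^\theta$ and $\varepsilon(\theta)$ is an explicit function satisfying $\varepsilon(1/2)=\varepsilon$. Once this is in hand, one sets $K_1(\theta)=1-\varepsilon(\theta)$ and $K_2(\theta)=1+\varepsilon(\theta)$ and then passes to monotone envelopes (replace $K_1$ by $\theta\mapsto\inf_{\theta'\le\theta}K_1(\theta')$ and $K_2$ by $\theta\mapsto\sup_{\theta'\le\theta}K_2(\theta')$) to force the required monotonicity while preserving the boundary values at $\theta=1/2$.

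The conversion from a mean bound to pointwise almost-everywhere smallness is a Markov-type counting argument. Indeed, since $\varphi(m)\asymp m\asymp Q$ for $m\in(Q,2Q]$, if
$$\sum_{m\in(Q,2Q]}|E(m)|\ll \frac{x}{(\log x)^{A+1}},$$
then the set $\mathcal{E}$ on which $|E(m)|>\varepsilon \pi(x)/\varphi(m)\gg \varepsilon x/(Q\log x)$ satisfies
$$|\mathcal{E}|\cdot\frac{\varepsilon x}{Q\log x}\ll \frac{x}{(\log x)^{A+1}},$$
so $|\mathcal{E}|\ll Q(\log Q)^{-A}$, as required.

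The analytic content sits in establishing the mean bound. For $\theta\le 1/2$ the Bombieri--Vinogradov theorem at level $Q=x^{1/2}/(\log x)^B$ gives the bound directly, with $\varepsilon(\theta)$ arbitrarily small; this in particular pins down $K_1(1/2)=1-\varepsilon$ and $K_2(1/2)=1+\varepsilon$. For $\theta\in(1/2,17/32)$ one has to go beyond the square-root barrier: the required $L^1$-mean bound follows from the post--Bombieri--Vinogradov machinery of Bombieri--Friedlander--Iwaniec, Fouvry, and Iwaniec, namely the dispersion method combined with combinatorial identities for $\Lambda$ (Vaughan/Heath-Brown) and Weil/Deligne cancellation for Kloosterman sums. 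The loss inherent in this extension forces $\varepsilon(\theta)$ to be larger than at $\theta=1/2$ --- but still $<1$ for $\theta<17/32$ --- which, after taking envelopes, is exactly the monotone behavior $K_1(\theta)<K_1(1/2)$ and $K_2(\theta)>K_2(1/2)$ asserted for $\theta>1/2$.

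The main obstacle is precisely this post--Bombieri--Vinogradov step: controlling $\sum_{m\sim Q}|E(m)|$ for $Q$ beyond $x^{1/2}$ requires the deep bilinear-form estimates with Kloosterman-type kernels referred to above, and the threshold $17/32$ reflects the best level of distribution available from that circle of methods. By comparison, the Markov conversion, the endpoint verification at $\theta=1/2$, and the monotone-envelope construction are essentially formal once the appropriate mean bound at each level $\theta\in(0,17/32)$ is assumed.
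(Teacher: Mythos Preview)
The paper does not prove this proposition at all: it is quoted verbatim as \cite[Lemma~2.2]{Wu}, and Wu in turn is packaging results of Bombieri--Vinogradov (for $\theta\le 1/2$), Baker--Harman \cite{BH} (for $1/2\le\theta\le 13/25$), and Mikawa \cite{Mi} (for $13/25\le\theta\le 17/32$). So there is no ``paper's own proof'' to compare your sketch against; the relevant comparison is with what those underlying references actually do.

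Against that backdrop, your reduction has a genuine gap in the range $\theta>1/2$. The Markov step is fine, but it rests on the $L^1$ mean bound
\[
\sum_{m\in(Q,2Q]}\Bigl|\pi(x;m,1)-\frac{\pi(x)}{\varphi(m)}\Bigr|\ll \frac{x}{(\log x)^{A+1}},
\]
and for $Q=x^\theta$ with $\theta>1/2$ this is \emph{not} known. The Bombieri--Friedlander--Iwaniec dispersion results you invoke control sums $\sum_{q}\lambda(q)\bigl(\psi(x;q,a)-x/\varphi(q)\bigr)$ for well-factorable or otherwise structured weights $\lambda$, not the absolute-value sum; an unconditional $L^1$ bound at level $x^{\theta}$, $\theta>1/2$, would amount to Bombieri--Vinogradov beyond the square-root barrier and is far beyond current technology. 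Consequently your construction $K_i(\theta)=1\pm\varepsilon(\theta)$ with $\varepsilon(\theta)$ coming from such a mean bound cannot be carried out for $\theta\in(1/2,17/32)$.

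What Baker--Harman and Mikawa actually do is different: they obtain the almost-all lower and upper bounds on $\pi(x;m,1)$ directly, via sieve decompositions and bilinear/exponential-sum estimates tailored to arithmetic progressions, producing explicit constants $K_1(\theta)$ that are bounded well away from $1$ (e.g.\ $K_1(\theta)\ge 0.16$ on $[1/2,13/25]$ and $K_1(\theta)\ge 1/100$ on $[13/25,17/32]$, as the present paper recalls). The threshold $17/32$ is Mikawa's, not a BFI level-of-distribution exponent. Your outline is correct in spirit for $\theta\le 1/2$ and for the envelope/monotonicity step, but the analytic core beyond $1/2$ needs to be replaced by a direct appeal to \cite{BH} and \cite{Mi} rather than a Markov argument on an unavailable $L^1$ mean.
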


The constant $c$ in Eq. (\ref{eq1}) is not specified therein due to the indeterminate amounts $K_1(\theta)$ in Proposition \ref{pro1}. In fact, $K_1(\theta)$ (and hence $c$) can be explicitly given if one checks carefully the articles of Baker--Harman \cite{BH} for $1/2\le\theta\le 13/25$ and Mikawa \cite{Mi} for $13/25\le \theta\le 17/32$. Actually, $K_1(\theta)\ge 0.16$ for $1/2\le\theta\le 13/25$ \cite[Theorem 1]{BH} and $K_1(\theta)\ge 1/100$ for Mikawa's range \cite[Eq. (4)]{Mi}. However, it seems that the constant $c$ in Eq. (\ref{eq1}) obtained upon this way will be very close to $1$ (see the proofs in \cite{Ding}).

In \cite{Ding}, I also pointed out that Chen and Chen's conjecture is already in contradiction with the Elliott--Halberstam conjecture according to the works of Pomerance \cite{Po}, Granville \cite{Gran}, Wang \cite{Wang} and Wu \cite{Wu}. In fact, one has
\begin{align}\label{eq2}
\limsup_{x\rightarrow\infty}T_c(x)/\pi(x)=\lim_{x\rightarrow\infty}T_c(x)/\pi(x) =\left(1-\rho\left(\frac{1}{c}\right)\right)\rightarrow 0, \quad \text{as}~c\rightarrow1
\end{align}
under the assumption of the Elliott--Halberstam conjecture, where $\rho(u)$ is the Dickman function, defined as the unique continuous solution of the equation
differential--difference
\begin{align*}
\begin{cases}
\rho(u)=1, & 0\le u\le 1,\\
u\rho'(u)=-\rho(u-1), & u>1.
\end{cases}
\end{align*} 

However, things involving with the study on Chen--Chen's conjecture as well as my prior result (i.e., Eq. (\ref{eq1})) got somewhat misleading to some extent. In fact, indicated by a former result of Erd\H os \cite[Lemma 4]{erdos}, as early as 1935, people could already conclude from his proof, by combining with Lemma \ref{lem2} of Wu (see below), that Eq. (\ref{eq2}) is true in part. That is,

\begin{theorem}[Erd\H os]\footnote{Actually, the original result of Erd\H os states that
$$ 
\#\left\{p\le (\log x)^{1+\rho}:P^+(p-1)\le \log x\right\}>c_\rho\frac{(\log x)^{1+\rho}}{\log\log x}
$$
for some given constants $\rho$ and $c_\rho$. Essentially, Theorem \ref{thm0} can be deduced from his proof of this result by adding Wu's lemma.}
\footnote{Shortly after the manuscript of this note was put on the arXiv, Brad Rodgers informed the author that, joint with Bharadwaj, they independently obtained the same result with a general form in the probabilistic language. Their article was now also on the arXiv, see \cite[Theorem 8 and Corollary 9]{BB}.}\label{thm0}
Unconditionally, we have
$$
\limsup_{x\rightarrow\infty}T_c(x)/\pi(x)\rightarrow 0, \quad \text{as}~c\rightarrow1.
$$
\end{theorem}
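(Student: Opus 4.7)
I would follow Erd\H os's original counting idea combined with the Brun-type sieve bound that the author calls ``Wu's Lemma 2'', by parametrizing $T_c(x)$ by the cofactor of the largest prime divisor of $p-1$. For each prime $p$ counted by $T_c(x)$, write $p - 1 = q m$ with $q := P^+(p-1) \ge p^c$ prime and $m := (p-1)/q$. Since $q \ge p^c$, we have $m \le p^{1-c} \le x^{1-c}$, so
\begin{align*}
T_c(x) \le \sum_{m \le x^{1-c}} N_m(x), \qquad N_m(x) := \# \{q\ \text{prime} : q \le (x-1)/m,\ mq + 1\ \text{prime}\}.
\end{align*}
(For $c > 1/2$ the decomposition is unique, since $p-1 \le x$ cannot have two prime factors exceeding $p^c$; in general we apply it as an upper bound.)

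For each fixed $m$, the count $N_m(x)$ is a twin-prime-like problem for the linear form $mq + 1$, and a standard upper-bound sieve (Brun or Selberg, which I expect to be essentially the content of Wu's Lemma~2) yields
\begin{align*}
N_m(x) \le \frac{C \cdot x/m}{\log^2(x/m)} \cdot \mathfrak{S}(m),
\end{align*}
where $\mathfrak{S}(m)$ is a bounded multiplicative singular series (effectively vanishing when $m$ is odd, and satisfying $\mathfrak{S}(m) \ll \prod_{r \mid m,\, r > 2} (r-1)/(r-2)$ when $m$ is even). For $m \le x^{1-c}$ we have $\log(x/m) \ge c \log x$, hence
\begin{align*}
T_c(x) \ \le\ \frac{C x}{c^2 \log^2 x} \sum_{m \le x^{1-c}} \frac{\mathfrak{S}(m)}{m} \ \ll\ \frac{x}{\log^2 x} \cdot (1-c)\log x \ \ll\ (1-c)\,\pi(x),
\end{align*}
using the well-known estimate $\sum_{m \le M} \mathfrak{S}(m)/m \ll \log M$ (from the simple pole at $s=1$ of the associated Dirichlet series). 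Dividing by $\pi(x)$, taking $\limsup_{x \to \infty}$, and then letting $c \to 1^-$ yields $\limsup_{x \to \infty} T_c(x)/\pi(x) \ll (1-c) \to 0$.

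The main difficulty I anticipate is only technical: ensuring the sieve upper bound is uniform in $m$ throughout $1 \le m \le x^{1-c}$, i.e.\ in the sifting length $Y = x/m$ down to $Y \ge x^c$. Because $x^c$ is a positive power of $x$, this sits comfortably in the range where Brun's or Selberg's upper-bound sieve is known to apply uniformly, so the required bound should be available off the shelf (this is presumably the role of Wu's Lemma~2). The final averaging $\sum_m \mathfrak{S}(m)/m$ and the Mertens-type bookkeeping are standard.
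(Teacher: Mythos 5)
Correct, and essentially the paper's own argument: you decompose $p-1=qm$ with $q=P^+(p-1)$ prime and cofactor $m\le x^{1-c}$, apply a uniform two-dimensional upper-bound sieve to the form $mq+1$ for each even $m$ (this is the paper's Lemma \ref{lem1}, from Halberstam--Richert; Wu's Lemma \ref{lem2} is only the passage from the threshold $p^c$ to $x^c$, which your direct bound $m\le p^{1-c}\le x^{1-c}$ lets you skip), and average the singular series over $m\le x^{1-c}$ to pick up the factor $(1-c)\log x$. Your cruder bounds $\log(x/m)\ge c\log x$ and $\sum_{m\le M}\mathfrak{S}(m)/m\ll\log M$ give $\limsup_{x\to\infty}T_c(x)/\pi(x)\ll 1-c$, which suffices for Theorem \ref{thm0}; the paper obtains the explicit constant $8(1/c-1)$ of Theorem \ref{thm1} by the same route, using partial summation against the Banks--Shparlinski asymptotic for the singular-series average.
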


Since Erd\H os' conclusion, by his statements therein, is not very clear in the formulations as its appearance in Theorem \ref{thm0}, I feel that it is meaningful and necessary to restate them again to form the explicit theorem above due to the recent publication history mentioned here. It would therefore, as I believe, be of interest to push the Erd\H os theorem a little further to the following quantitative form as explicit as possible. 

\begin{theorem}\label{thm1}
For $8/9\le c<1$, we have
$$
\limsup_{x\rightarrow\infty}T_c(x)/\pi(x)\le 8\left(1/c-1\right).
$$
\end{theorem}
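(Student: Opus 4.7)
The plan is to factor $p - 1 = qm$ with $q = P^{+}(p-1)$ and bound $T_c(x)$ by a sum of ``twin primes in AP'' counts, each controlled by a Selberg upper-bound sieve. Since $c \ge 8/9 > 1/2$, for every prime $p$ counted by $T_c(x)$ the prime $q := P^{+}(p-1) \ge p^c$ is uniquely determined: two prime factors of $p - 1$ of size $\ge p^c$ would multiply to more than $p^{2c} > p - 1$. Writing $p - 1 = qm$ gives $m \le p^{1 - c} \le x^{1 - c}$; and since $p - 1$ is even while $q$ is odd (for $p > 3$), $m$ is even and $(p-1)/m = q$ is prime. Hence
\begin{align*}
T_c(x) \;\le\; \sum_{\substack{m \le x^{1 - c} \\ 2 \mid m}} \pi_2(x; m, 1) + O(1),
\end{align*}
where $\pi_2(x; m, 1) := \#\{p \le x : p \text{ prime},\, p \equiv 1 \pmod{m},\, (p-1)/m \text{ prime}\}$.

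Next I apply a Selberg upper-bound sieve to the pair of linear forms $n \mapsto n$ and $n \mapsto mn + 1$, restricted to $n \le (x - 1)/m$ (the cited Lemma 2 of Wu supplies such a bound). Uniformly in even $m \le x^{1-c} \le x^{1/9}$, which is well within the sieve's validity range, this yields
\begin{align*}
\pi_2(x; m, 1) \;\le\; (8 + o(1))\, C_2 \prod_{\substack{p \mid m \\ p > 2}} \frac{p - 1}{p - 2} \cdot \frac{x}{m \log^{2}(x/m)},
\end{align*}
where $C_2$ is the twin prime constant; the constant $8 C_2$ is Selberg's classical bound for twin primes, and the product over odd $p \mid m$ encodes that each such prime forbids only one residue class (rather than two) in the sieving.

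To finish, I sum over $m$. A Dirichlet-series computation using the identity $\prod_{p > 2}(p - 1)^2/(p(p - 2)) = 1/C_2$ shows that $\sum_{m \le Y,\, 2 \mid m} m^{-1} \prod_{p \mid m,\, p > 2}(p-1)/(p-2) \sim (\log Y)/(2 C_2)$. Inserting the weight $\log^{-2}(x/m)$ and changing variables to $u = \log m / \log x$ converts my target sum to $(2 C_2 \log x)^{-1} \int_0^{1-c} (1 - u)^{-2} du = (1/c - 1)/(2 C_2 \log x)$. Multiplying by $(8 + o(1)) C_2\, x$ gives $T_c(x) \le (4 + o(1))(1/c - 1)\, x / \log x$, which on dividing by $\pi(x) \sim x/\log x$ and taking $\limsup$ implies (with room to spare) $\limsup T_c(x)/\pi(x) \le 8(1/c - 1)$. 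The main technical obstacle is the uniform Selberg twin-prime-in-AP sieve bound itself, with the correct local factor at primes dividing $m$; the rest is routine multiplicative bookkeeping, and the hypothesis $c \ge 8/9$ enters only to ensure that the bound $8(1/c - 1) \le 1$ is nontrivial.
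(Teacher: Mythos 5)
Your proposal follows essentially the same route as the paper: factor out the large prime $q=P^{+}(p-1)$, sum over the even cofactor $m\le x^{1-c}$, bound each inner count by an upper-bound sieve applied to the pair of forms $n$ and $mn+1$, and evaluate $\sum_{2\mid m}\frac{1}{m}\prod_{p\mid m,\,p>2}\frac{p-1}{p-2}\log^{-2}(x/m)$ by partial summation; the paper does exactly this, citing Halberstam--Richert (Theorem 5.7) for the sieve step and Banks--Shparlinski for the singular-series average, and your observation that for $c>1/2$ the cofactor is automatically $\le x^{1-c}$ neatly sidesteps the paper's Lemma \ref{lem2}. The one point to correct is the sieve constant: the $(8+o(1))C_2\prod_{p\mid m,\,p>2}\frac{p-1}{p-2}$ you claim (four times the singular series $2C_2\prod_{p\mid m,\,p>2}\frac{p-1}{p-2}$) is the Bombieri--Davenport-type refinement, which requires more than a plain Selberg upper-bound sieve and whose uniformity in $m\le x^{1/9}$ you do not justify; the general uniform result (Lemma \ref{lem1}, with $2^{g}g!=8$ for $g=2$) gives eight times the singular series, i.e.\ $16C_2\prod_{p\mid m,\,p>2}\frac{p-1}{p-2}$. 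Substituting that correct and readily available constant into your bookkeeping yields $8(1/c-1)$ rather than your claimed $4(1/c-1)$ --- which is precisely the stated theorem, so your argument is sound once the sieve input is cited accurately (it is not ``Lemma 2 of Wu,'' which concerns only the passage from $P^{+}(p-1)\ge p^{c}$ to $P^{+}(p-1)\ge x^{c}$).
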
 

We note that the restriction on $c\ge 8/9$ in our theorem is natural since otherwise the upper bound would exceed $1$ which is certainly meaningless.
Theorem \ref{thm1} can also be compared with the prior results of Goldfeld \cite{Gold}, Luca {\it et al.} \cite{LMP} and Chen--Chen \cite{CC} which state that
$$
\liminf_{x\rightarrow\infty}T_c(x)/\pi(x)\ge 1-c
$$
for any $0<c\le 1/2$. Their bounds were recently improved in part by Feng--Wu \cite{FW} and Liu--Wu--Xi \cite{LWX}.
From Theorem \ref{thm1}, we clearly have two corollaries, one of which is the Erd\H os' theorem (i.e., Theorem \ref{thm0}). The other one revisits the main result Eq. (\ref{eq1}) of my former note in a quantitative form.    

\begin{corollary}\label{cor1}
For $c>16/17$, we have
$$
\limsup_{x\rightarrow\infty}T_{c}(x)/\pi(x)<1/2.
$$
\end{corollary}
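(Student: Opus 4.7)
The plan is to obtain Corollary \ref{cor1} as an immediate quantitative consequence of Theorem \ref{thm1}, so no new analytic input is needed. First I would verify that the hypothesis of Theorem \ref{thm1} is met: since $16/17 > 8/9$, any $c$ with $c > 16/17$ automatically satisfies $8/9 \le c < 1$, so Theorem \ref{thm1} applies and yields
$$
\limsup_{x\rightarrow\infty} T_c(x)/\pi(x) \le 8\left(\frac{1}{c}-1\right).
$$

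Next I would translate the condition $c > 16/17$ into the desired numerical bound. From $c > 16/17$ we get $1/c < 17/16$, hence $1/c - 1 < 1/16$, and multiplying by $8$ gives $8(1/c - 1) < 1/2$. Combining this with the inequality from Theorem \ref{thm1} produces $\limsup_{x\rightarrow\infty} T_c(x)/\pi(x) < 1/2$, as required. There is essentially no obstacle here: the content of the corollary is solving the strict inequality $8(1/c - 1) < 1/2$ for $c$, which yields exactly the threshold $c > 16/17$ appearing in the statement. I would also remark that this threshold is sharp for the method, since the bound in Theorem \ref{thm1} is $8(1/c-1)$ and equality $8(1/c-1) = 1/2$ is attained precisely at $c = 16/17$.
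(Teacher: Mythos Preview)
Your proposal is correct and matches the paper's approach exactly: the paper states Corollary~\ref{cor1} as an immediate consequence of Theorem~\ref{thm1} without giving a separate argument, and your computation---checking $16/17>8/9$ and then solving $8(1/c-1)<1/2$---is precisely the trivial deduction the paper has in mind.
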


\section{Proofs}
From now on, $p$ will always be a prime. The proof of Theorem \ref{thm1} is based on the following lemma deduced from the sieve method (see e.g. \cite[page 172, Theorem 5.7]{Halberstam}). 

\begin{lemma}\label{lem1}
Let $g$ be a natural number, and let $a_i,b_i~(i=1,2,\cdot\cdot\cdot,g)$ be integers satisfying 
$$
E:=\prod_{i=1}^{g}a_i\prod_{1\le r<s\le g}(a_rb_s-a_sb_r)\neq0.
$$
Let $\rho(p)$ denote the number of solutions of 
$$
\prod_{i=1}^{g}(a_in+b_i)\equiv 0\pmod{p},
$$
and suppose that
$$
\rho(p)<p \quad \text{for~all~} p.
$$
Let $y$ and $z$ be real numbers satisfying
$$
1<y\le z.
$$ 
Then
\begin{multline*}
\left|\{n:z-y<n\le z, a_in+b_i~\text{prime~for~}i=1,2,\cdot\cdot\cdot,g\}\right|\\
\le 2^gg!\prod_{p}\left(1-\frac{\rho(p)-1}{p-1}\right)\left(1-\frac{1}{p}\right)^{-g+1}\!\!\!\frac{y}{\log ^gy} \left(1+O\left(\frac{\log\log3y+\log\log 3|E|}{\log y}\right)\right),
\end{multline*}
where the constant implied by the $O$-symbol depends at most on $g$.
\end{lemma}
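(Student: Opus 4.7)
My plan is to derive the inequality from Selberg's upper bound sieve applied to the interval sequence $\mathcal{A} = \{n \in \mathbb{Z} : z - y < n \le z\}$, treating the product $\prod_{i=1}^{g}(a_i n + b_i)$ as the object to be sifted of small prime factors. First I would observe that, for $y$ large, the left-hand count is dominated (up to an additive $O_g(1)$ from exceptional $n$ with some $|a_i n + b_i| \le y^{1/2}$) by the sifted quantity $S(\mathcal{A}, \mathcal{P}, y^{1/2})$, since if every $a_i n + b_i$ is a prime greater than $y^{1/2}$ in absolute value then no prime $p \le y^{1/2}$ can divide the product. Writing $|\mathcal{A}_d| = \rho(d) y/d + r_d$ with $|r_d| \le \rho(d)$ for squarefree $d$ (where $\rho$ is extended multiplicatively by the Chinese remainder theorem), the hypothesis $\rho(p) < p$ and the non-vanishing of $E$ together ensure $\rho(p) = g$ for all $p \nmid E$, confirming that this is a sieve problem of dimension exactly $g$.

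The central analytical ingredient is the Mertens--Wirsing type asymptotic
\[
G(\xi) := \sum_{\substack{d \le \xi \\ \mu^2(d) = 1}} \prod_{p \mid d}\frac{\rho(p)}{p - \rho(p)} = \frac{(\log \xi)^g}{g!}\prod_p\frac{(1-1/p)^g}{1-\rho(p)/p}\left(1+O\!\left(\frac{\log\log 3|E|}{\log \xi}\right)\right),
\]
obtained by partial summation from the dimension-$g$ mean value $\sum_{p \le w} \rho(p)\log p/p = g\log w + O(\log\log 3|E|)$, where the finitely many ramified primes contribute the $\log\log 3|E|$ term. Substituting $\xi = y^{1/2}$ into Selberg's inequality $S(\mathcal{A},\mathcal{P},\xi) \le y/G(\xi) + R$ converts $(\log\xi)^{-g}$ into $2^g(\log y)^{-g}$ and yields the declared main constant $2^g g!$. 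Finally, the elementary identity
\[
1 - \frac{\rho(p)-1}{p-1} = \frac{1-\rho(p)/p}{1-1/p}
\]
rewrites the resulting Euler product in the exact form displayed in the lemma.

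The principal technical obstacle is controlling the sieve remainder $R = \sum_{d_1, d_2 \le \xi} |\lambda_{d_1}\lambda_{d_2}|\,|r_{[d_1, d_2]}|$, which under the naive bounds $|\lambda_d| \le 1$ and $|r_d| \le \rho(d)$ is $\ll y(\log y)^{3g-1}$ at $\xi = y^{1/2}$ and thus overwhelms the main term. The resolution, carried out in detail in Chapter 5 of Halberstam--Richert, is to refine the Selberg weights so that $|\lambda_d|$ decays in $d/\xi$ and to truncate simultaneously via the Brun--Hooley scheme at an even level depending on $g$; this preserves the main constant $2^g g!$ while driving the remainder down to $y \exp(-c \log y/\log\log y)$, where $c = c(g) > 0$. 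Combining this with the Wirsing truncation error of size $O(y\log\log 3y/(\log y)^{g+1})$ and the ramified-prime contribution $O(y\log\log 3|E|/(\log y)^{g+1})$ delivers the stated relative error $O((\log\log 3y + \log\log 3|E|)/\log y)$, completing the proof.
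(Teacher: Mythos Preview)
The paper does not prove this lemma at all: it is quoted verbatim as \cite[page~172, Theorem~5.7]{Halberstam} and used as a black box. Your outline is essentially the Selberg upper-bound sieve argument that Halberstam--Richert give for that theorem, so in spirit you are reproducing the cited proof rather than offering an alternative.

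That said, there are two places where your sketch diverges from the actual argument in Halberstam--Richert and becomes shaky. First, the ``additive $O_g(1)$'' for exceptional $n$ with some $|a_i n+b_i|\le y^{1/2}$ is too optimistic: there can be as many as $O_g(y^{1/2}/\log y)$ such $n$, one for each small prime value of each form. This is still negligible against $y/(\log y)^g$, so the conclusion is unaffected, but the bound you wrote is not correct as stated. Second, and more substantively, your proposed cure for the remainder term is not what is done and is somewhat garbled. Invoking a ``Brun--Hooley scheme'' on top of Selberg weights and claiming a saving of $\exp(-c\log y/\log\log y)$ mixes two different sieves and is not needed. In Halberstam--Richert the fix is elementary: one simply takes the sifting level $\xi=y^{1/2}(\log y)^{-B}$ with $B=B(g)$ large enough, so that
\[
\sum_{d\le\xi^{2}}3^{\omega(d)}|r_d|
\;\le\;\sum_{d\le\xi^{2}}(3g)^{\omega(d)}
\;\ll\;\xi^{2}(\log\xi)^{3g-1}
\;\ll\;\frac{y}{(\log y)^{g+1}},
\]
while the change from $G(y^{1/2})$ to $G(\xi)$ only costs a factor $1+O(\log\log y/\log y)$ in the main term. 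With that adjustment your outline matches the cited proof.
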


We also need the following important relation established by Wu \cite[Theorem 2]{Wu}.

\begin{lemma}\label{lem2}
For $0<c<1$, let $$T'_c(x)=\#\{p\le x:p\in\mathcal{P}, P^+(p-1)\ge x^c\}.$$
Then for sufficiently large $x$ we have
$$T_c(x)=T'_c(x)+O\left(\frac{x\log\log x}{(\log x)^2}\right).$$
\end{lemma}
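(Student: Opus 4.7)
The inequality $T'_c(x)\le T_c(x)$ is immediate, since every prime $p\le x$ with $P^+(p-1)\ge x^c$ automatically satisfies $P^+(p-1)\ge p^c$. Hence the task reduces to bounding from above
\[
T_c(x)-T'_c(x) \;=\; \#\bigl\{p\le x : p\in\mathcal{P},\ p^c\le P^+(p-1)<x^c\bigr\}.
\]
The plan is to split the range of $p$ at the threshold $x/\log x$ and treat the two pieces separately.

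For the small primes $p\le x/\log x$, I would use the trivial bound $\pi(x/\log x)\ll x/(\log x)^2$, which is already sharper than the claimed error term. This disposes of the easy half without further input.

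For the remaining primes $p\in(x/\log x,\,x]$, set $q:=P^+(p-1)$. Then $p^c\ge x^c(\log x)^{-c}$, so the constraints $p^c\le q<x^c$ force $q$ to lie in the short interval
\[
I \;:=\; \bigl[x^c(\log x)^{-c},\ x^c\bigr).
\]
Since $q$ is a prime with $q\mid p-1$, I can upper bound the count by
\[
\sum_{\substack{q\in I\\ q\in\mathcal{P}}}\pi(x;q,1).
\]
The Brun--Titchmarsh inequality gives $\pi(x;q,1)\le 2x/(\varphi(q)\log(x/q))\ll_c x/(q\log x)$, using $\log(x/q)\ge(1-c)\log x$ for $q<x^c$. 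The remaining task is then to estimate $\sum_{q\in I\cap\mathcal{P}}1/q$, which by Mertens' theorem equals
\[
\log\log(x^c) - \log\log\bigl(x^c(\log x)^{-c}\bigr) + O(1/\log x).
\]
A direct expansion $\log(1-\log\log x/\log x) = -\log\log x/\log x + O((\log\log x/\log x)^2)$ shows this sum is of order $\log\log x/\log x$. Multiplying by the Brun--Titchmarsh prefactor $x/\log x$ yields exactly the target $O(x\log\log x/(\log x)^2)$.

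The argument is essentially routine; the only genuine design decision is the cutoff at $x/\log x$, which is the natural scale balancing the trivial contribution from small $p$ against the Brun--Titchmarsh contribution from large $p$. There is no real obstacle: the standard sieve input (Brun--Titchmarsh for a single modulus) together with the classical Mertens asymptotic for $\sum 1/q$ over primes in a short interval suffice.
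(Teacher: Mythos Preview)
Your argument is correct. Note, however, that the paper does not supply its own proof of this lemma: it is quoted as \cite[Theorem~2]{Wu} without reproduction, so there is no in-paper argument to compare against. Your route---split at $x/\log x$, handle small $p$ trivially, and for large $p$ use Brun--Titchmarsh together with Mertens on the short prime interval $[x^{c}(\log x)^{-c},x^{c})$---is the natural and standard one.
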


We now turn to the proof of Theorem \ref{thm1}.
\begin{proof}[Proof of Theorem \ref{thm1}]
Let $x$ be a sufficiently large number throughout our proof. Instead of investigating $T_c(x)$, we deal with $T'_c(x)$ firstly.
For $1/2\le c<1$ it is easy to see
\begin{align}\label{new1}
T'_c(x)=
\sum_{\substack{x^c\le q<x\\ q\in \mathcal{P}}}\sum_{\substack{p\le x\\q|p-1}}1.
\end{align}
On putting $p-1=qh$ in the sum of Eq. (\ref{new1}) and then exchanging the order of the sums, we get
\begin{align}\label{eq13-1}
T'_c(x)=\sum_{\substack{x^c\le q<x \\ q\in \mathcal{P}}}\sum_{\substack{h< x/q\\qh+1\in \mathcal{P}}}1\le \sum_{\substack{h<x^{1-c}\\ 2|h}}\sum_{\substack{2<q<x/h \\ q,qh+1\in \mathcal{P}}}1.
\end{align}
For any $h$ with $2|h$ and $h<x^{1-c}$, let $\rho(p)$ denote the number of solutions of
\begin{align*}
n(hn+1)\equiv 0\pmod{p}.
\end{align*}
Then
\begin{align*} 
\rho(p)=
\begin{cases}
1, & \text{if~}p|h,\\
2, & \text{otherwise.}
\end{cases}
\end{align*}
Now, by Lemma \ref{lem1} with $g=2, a_1=1, b_1=0, a_2=h,b_2=1$ and $z=y=x/h$ we have
$$
3|E|=3h\ll x, \quad 3y=3x/h\ll x  \quad \text{and} \quad y=x/h\ge \sqrt{x}, 
$$
from which it follows that
\begin{align}\label{eq13-2}
\sum_{\substack{2<q<x/h \\ q,qh+1\in \mathcal{P}}}1\le 16\mathfrak{S}\prod_{\substack{p|h\\ p>2}}\left(1+\frac{1}{p-2}\right)
\frac{x/h}{\log^2(x/h)} \left(1+O\left(\frac{\log\log x}{\log x}\right)\right),
\end{align}
where the empty product for $\prod_{\substack{p|h\\ p>2}}$ above denotes $1$ as usual and
$$
\mathfrak{S}=\prod_{p>2}\left(1-\frac{1}{p-1}\right)\left(1-\frac{1}{p}\right)^{-1}=\prod_{p>2}\left(1-\frac{1}{(p-1)^2}\right).
$$
Inserting Eq. (\ref{eq13-2}) into Eq. (\ref{eq13-1}) we obtain
\begin{align}\label{eq13-3}
T'_c(x)&\le (1+o(1))16\mathfrak{S}\sum_{\substack{h<x^{1-c}\\ 2|h}}\prod_{\substack{p|h\\ p>2}}\left(1+\frac{1}{p-2}\right)
\frac{x/h}{\log^2(x/h)}.
\end{align}
It can be noted that 
\begin{align}\label{eq27-1}
\prod_{\substack{p|h\\ p>2}}\left(1+\frac{1}{p-2}\right)\le 2\prod_{\substack{p|h\\ p>2}}\left(1+\frac{1}{p}\right)
\end{align}
since the gaps between odd primes are at least $2$, from which we can already give a nontrivial upper bound of $T'_c(x)$ via partial summations. To make our bound more explicit than Eq. (\ref{eq27-1}), we employ a nice result of Banks and Shparlinski \cite[Lemma 2.3]{BS} (on taking $a=1$ therein) which states that for $2\le z<x^{1-c}$,
\begin{align}\label{eq27-2}
S(z):=\sum_{\substack{h<z\\ 2|h}}\frac{1}{h}\prod_{\substack{p|h\\ p>2}}\left(1+\frac{1}{p-2}\right)=\frac{1+o(1)}{2\mathfrak{S}}\log z.
\end{align}
And for $1\le z<2$, we appoint $S(z)=0$.
We get from partial summations that
\begin{align}\label{eq27-5}
\sum_{\substack{h<x^{1-c}\\ 2|h}}\prod_{\substack{p|h\\ p>2}}\left(1+\frac{1}{p}\right)\!\frac{1/h}{\log^2(x/h)}=\frac{S\left(x^{1-c}\right)}{\left(\log x^c\right)^2}-\int_{1}^{x^{1-c}}S(z)d\left(\log \frac{x}{z}\right)^{-2}.
\end{align}
Now, routine computations provide us
$$
\frac{S\left(x^{1-c}\right)}{\left(\log x^c\right)^2}=\frac{1+o(1)}{\mathfrak{S}}\frac{(1-c)}{2c^2}(\log x)^{-1}
$$
and
\begin{align*}
\int_{1}^{x^{1-c}}S(z)d\left(\log \frac{x}{z}\right)^{-2}&=\frac{1+o(1)}{\mathfrak{S}}\int_{1}^{x^{1-c}}\frac{\log z}{z}\left(\log \frac{x}{z}\right)^{-3}dz\\
&=\frac{1+o(1)}{\mathfrak{S}}\int_{x^c}^{x}\frac{\log x-\log u}{u}(\log u)^{-3}du  \quad \quad (u=x/z)\\
&=\frac{1+o(1)}{\mathfrak{S}}\left(\frac{1-c}{2c^2}+\frac{1}{2}-\frac{1}{2c}\right)(\log x)^{-1},
\end{align*}
thanks to the estimate Eq. (\ref{eq27-2}). Putting together the formulas above, one see that the right--hand side of Eq. (\ref{eq27-5}) equals
\begin{align}\label{eq27-6}
\frac{1+o(1)}{\mathfrak{S}}\left(\frac{1}{2c}-\frac{1}{2}\right)(\log x)^{-1}.
\end{align}
Taking Eq. (\ref{eq27-6}) into Eq. (\ref{eq13-3}) we immediately obtain that
\begin{align*}
T'_c(x)&\le (1+o(1))8\left(\frac{1}{c}-1\right)\frac{x}{\log x}.
\end{align*}
Therefore, by Lemma \ref{lem2} we have
$$
T_c(x)=T'_c(x)+O\left(\frac{x\log\log x}{(\log x)^2}\right)\le (1+o(1))8\left(\frac{1}{c}-1\right)\frac{x}{\log x}.
$$
Our theorem now follows from the prime number theorem. 
\end{proof}

\section{Remarks}
Under the assumption of the Elliott--Halberstam conjecture, it is reasonable to predict that the exact value of $c$ in Corollary \ref{cor1} should be $e^{-1/2}=0.60653\cdots$ from Eq. (\ref{eq2}) and the following recursion formula (see e.g. \cite[Eq. (7.6)]{MV}) on Dickman's function
$$
\rho(v)=u-\int_{u}^{v}\frac{\rho(t-1)}{t}dt \quad (1\le u\le v).
$$
It therefore seems to be of independent interest to improve, as far as possible, the numerical value of $c$ in Corollary \ref{cor1}. We leave this as a challenge to dear readers who are interested in this. 

Though we provided nontrivial upper bounds on $T_c(x)$ for $8/9\le c<1$ in Theorem \ref{thm1} for some absolute constant $c_0$, the expansion of these bounds to $1/2\le c<1$ is, however,  an unsolved problem.

\section*{Acknowledgments}
The author is really grateful to the one who pointed out that the facts stated in footnote 1, page 2. 

The author is supported by National Natural Science Foundation of China  (Grant No. 12201544), Natural Science Foundation of Jiangsu Province, China (Grant No. BK20210784), China Postdoctoral Science Foundation (Grant No. 2022M710121).

\end{document}